\lstdefinelanguage{AMPL}{keywords={set,param,var,arc,integer,minimize,maximize,subject,to,node,sum,in,Current,complements,integer,solve_result_num,IN,contains,less,suffix,INOUT,default,logical,sum,Infinity,dimen,max,symbolic
,Initial,div,min,table,LOCAL,else,option,then,OUT,environ,setof ,union,all,exists,shell_exitcodeuntil,binary,forall,solve_exitcodewhile ,by,if,solve_messagewithin,check,in,solve_result
},sensitive=true,comment=[l]{\#}}
\tiny\color{gray},
\theoremstyle{plain}
\newtheorem{thm}{Theorem}[section]
\newtheorem{lem}[thm]{Lemma}
\newtheorem{prop}[thm]{Proposition}
\newtheorem{cor}[thm]{Corollary}
\newtheorem{defi}[thm]{Definition}
\title{An adiabatic quantum algorithm for the Frobenius problem}
\author[1,2]{J. Ossorio-Castillo\thanks{Email: jqnssr@gmail.com (\Letter). Supported by MTM2016-75027-P (MEyC).}}
\author[2]{Jos\'e M. Tornero \thanks{Email: tornero@us.es. Supported by MTM2016-75027-P (MEyC) and P12-FQM-2696 (JdA).}}
\affil[1]{\small{\textit{Instituto Tecnol\'oxico de Matem\'atica Industrial (ITMATI), 15782 Santiago de Compostela, Spain}}}
\affil[2]{\textit{IMUS \& Departamento de \'Algebra, Universidad de Sevilla, 41012 Sevilla, Spain}}
\date{}
\begin{document}
\maketitle

\begin{abstract}
The (Diophantine) Frobenius problem is a well-known NP-hard problem (also called the stamp problem or the chicken nugget problem) whose origins lie in the realm of combinatorial number theory. In this paper we present an adiabatic quantum algorithm which solves it, using the so-called Apéry set of a numerical semigroup, via a translation into a QUBO problem. The algorithm has been specifically designed to run in a D-Wave 2X machine.
\end{abstract}

\section{Numerical semigroups and the Frobenius problem}

The study of numerical semigroups has its origins at the end of the 19th Century, when English mathematician James Joseph Sylvester (1814 -- 1897) and German mathematician Ferdinand Georg Frobenius (1849 -- 1917) were both interested in what is now known as the Frobenius problem, which we proceed to enunciate.

\begin{defi}\label{def:fcp}
Let $a_1, a_2,\ldots , a_n \in \mathbb{Z}_{\geq 0}$ with $\gcd (a_1, a_2,\ldots , a_n) = 1$, the Frobenius problem, or {\em FP}, is the problem of finding the largest positive integer that cannot be expressed as an integer conical combination of these numbers, i.e., as a sum

$$\sum_{j=1}^{n} \lambda_i a_i \text{ with } \lambda_i \in \mathbb{Z}_{\geq 0}.$$
\end{defi}

This problem, so easy to state, can be extremely complicated to solve in the majority of cases, as will be seen later. It can be found in a wide variety of contexts, being the most famous the problem of finding the largest amount of money which cannot be obtained with a certain set of coins: if, for example, we have an unlimited amount of coins of 2 and 5 units, we can represent any quantity except 1 and 3. \\

In order to understand the relationship between this problem and numerical semigroups, we shall first define the latter.

\begin{defi}
A semigroup is a pair $(S,+)$, where $S$ is a set and $+$ is a binary operation $+ : S \times S \rightarrow S$ that is associative.
\end{defi}

\begin{defi}
A numerical semigroup $S$ is a subset of the non-negative integers $\mathbb{Z}_{\geq 0}$ which is closed under addition, contains the identity element $0$, and has a finite complement in $\mathbb{Z}_{\geq 0}$.
\end{defi}

From now on, we shall denote numerical semigroups as $S$, taking for granted that they are commutative and that their associated operation is the addition. As it can be easily noted, a numerical semigroup is trivially a semigroup. In other words, a numerical semigroup is a semigroup that, additionally, is a monoid (i.e., it also has an identity element) and has finite complement in $\mathbb{Z}_{\geq 0}$. In order to work with numerical semigroups, it will be necessary to characterize them somehow. For that, let us set forth the following lemma. The proof of this result (and of any other result in this section, unless otherwise stated) can be found in \cite{rosales2009numerical}.

\begin{lem}
Let $A = \{a_1,...,a_n\}$ be a nonempty subset of $\mathbb{Z}_{\geq 0}$. Then, $$S = \langle A \rangle = \langle a_1,...,a_n \rangle = \{ \lambda_1a_1 + ... + \lambda_n a_n \ | \ \lambda_i \in \mathbb{Z}_{\geq 0}  \}$$ is a numerical semigroup if and only if $\gcd(a_1, ... , a_n) = 1$.
\end{lem}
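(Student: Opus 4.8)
The plan is to reduce the statement to a question about the complement of $S$ in $\mathbb{Z}_{\geq 0}$. First I would observe that $S = \langle A \rangle$ is \emph{automatically} a submonoid of $(\mathbb{Z}_{\geq 0}, +)$: the sum of two conical combinations of the $a_i$ is again such a combination, so $S$ is closed under addition, and taking all $\lambda_i = 0$ shows $0 \in S$. Consequently the only clause in the definition of a numerical semigroup that is not automatic is finiteness of the complement $\mathbb{Z}_{\geq 0} \setminus S$. Hence the lemma is equivalent to the assertion that $\mathbb{Z}_{\geq 0} \setminus S$ is finite if and only if $\gcd(a_1, \ldots, a_n) = 1$.

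For the easy direction I would argue by contraposition. Writing $d = \gcd(a_1, \ldots, a_n)$, every generator $a_i$ is a multiple of $d$, so every conical combination is as well; thus $S \subseteq d\,\mathbb{Z}_{\geq 0}$. If $d > 1$ this forces the complement to contain every positive integer not divisible by $d$, an infinite set. Therefore $S$ being a numerical semigroup forces $d = 1$.

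For the converse, assume $\gcd(a_1, \ldots, a_n) = 1$. The subtle point is that Bézout's identity only yields \emph{integer} coefficients $z_i \in \mathbb{Z}$ with $\sum_i z_i a_i = 1$, and these need not be nonnegative, so one cannot conclude $1 \in S$ directly. The device I would use is to split the Bézout relation according to the sign of the coefficients: setting $p = \sum_{z_i > 0} z_i a_i$ and $q = \sum_{z_i < 0} (-z_i) a_i$ gives two elements $p, q \in S$ with $p - q = 1$, i.e. $q$ and $q+1$ are \emph{consecutive} integers both lying in $S$ (the degenerate case $q = 0$ gives $1 \in S$, whence $S = \mathbb{Z}_{\geq 0}$ immediately, so I may assume $q \geq 1$).

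It then remains to show that a submonoid containing two consecutive integers $q, q+1$ omits only finitely many nonnegative integers. Since $S$ is closed under addition it contains every $aq + b(q+1) = (a+b)q + b$ with $a, b \in \mathbb{Z}_{\geq 0}$. Given $m \in \mathbb{Z}_{\geq 0}$, I would apply the division algorithm to write $m = sq + t$ with $0 \leq t \leq q-1$, and then take $b = t$ and $a = s - t$; this expresses $m$ as an element of $S$ whenever $a \geq 0$, which holds as soon as $s = \lfloor m/q \rfloor \geq q-1 \geq t$, i.e. for all $m \geq q(q-1)$. Hence every integer $m \geq q(q-1)$ belongs to $S$, the complement is finite, and $S$ is a numerical semigroup. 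The main obstacle is precisely the gap between Bézout's signed coefficients and the nonnegativity required for membership in $S$; the consecutive-elements trick is what bridges it, after which the argument is an elementary application of the division algorithm.
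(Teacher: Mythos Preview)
Your proof is correct. Note, however, that the paper does not actually prove this lemma in-text: it defers the proof to the reference \cite{rosales2009numerical} (Rosales and Garc\'ia-S\'anchez, \emph{Numerical Semigroups}), so there is no in-paper argument to compare against line by line. That said, your argument is precisely the standard one found in that reference: reduce to finiteness of the complement, dispose of the necessity direction by the divisibility obstruction, and for sufficiency use B\'ezout to produce two consecutive elements $q, q+1 \in S$ and then show via the division algorithm that every integer from $q(q-1)$ onward lies in $S$. Your handling of the sign issue in B\'ezout's identity (splitting into positive and negative parts to land in $S$) and of the degenerate case $q=0$ is clean and complete.
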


The previous lemma tells us that, drawing from a set $A \subseteq \mathbb{Z}_{\geq 0}$, it is possible to generate a semigroup $S = \langle A \rangle$ as long as the elements of $A$ satisfy a certain condition. In this context, any set $A$ such that $S = \langle A \rangle$ for a certain numerical semigroup $S$ is called a system of generators of $S$. Even more, it can be proved that any numerical semigroup can be expressed that way, as will be shown next.

\begin{thm}
Every numerical semigroup $S$ admits a unique minimal system of generators, which can be calculated as $S^* \setminus (S^* + S^*)$ with $S^* = S \setminus \{0\}$.
\end{thm}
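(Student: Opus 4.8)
The plan is to set $M := S^* \setminus (S^* + S^*)$ — the set of \emph{irreducible} nonzero elements, i.e. those that cannot be written as a sum of two nonzero elements of $S$ — and to establish three facts: that $M$ is finite and generates $S$, that $M$ is contained in every system of generators of $S$, and that these together force $M$ to be the unique minimal system of generators.

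First I would dispatch finiteness, so that ``minimal system of generators'' is a meaningful notion. Since $S$ has finite complement in $\mathbb{Z}_{\geq 0}$, there is a conductor $c$ with $c + \mathbb{Z}_{\geq 0} \subseteq S$, and a smallest positive element $n_1 = \min S^*$. If $m \in S^*$ satisfies $m \geq c + n_1$, then $m - n_1 \geq c$ forces $m - n_1 \in S^*$, whence $m = n_1 + (m - n_1) \in S^* + S^*$ and $m \notin M$. Thus $M$ lies inside the finite set $\{ s \in S^* : s < c + n_1 \}$. (Equivalently one may locate $M$ among $\{n_1\}$ together with the Apéry set of $S$ relative to $n_1$, which anticipates the tool used later in the paper.)

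Next, that $\langle M \rangle = S$. The inclusion $\langle M \rangle \subseteq S$ is immediate since $M \subseteq S$ and $S$ is closed under addition and contains $0$. For the reverse I would argue by strong induction on $s \in S$: the element $0$ is the empty sum; and for $s \in S^*$ either $s \in M$, and we are done, or $s = a + b$ with $a, b \in S^*$, in which case $a, b < s$ and the induction hypothesis expresses both as sums of elements of $M$, hence so does $s$.

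The decisive step — the one I expect to carry the real content — is showing $M \subseteq A$ for every system of generators $A$, which we may assume lies in $S^*$ since $0$ contributes nothing to $\langle A \rangle$. Take $m \in M$ and write $m = \sum_{a \in A} \lambda_a a$ with $\lambda_a \in \mathbb{Z}_{\geq 0}$. As $m > 0$, the total $\sum_a \lambda_a$ is at least $1$; if it were $\geq 2$ I could peel off one generator $a_0$ occurring in the sum and write $m = a_0 + (m - a_0)$ with both summands in $S^*$, contradicting $m \notin S^* + S^*$. Hence $\sum_a \lambda_a = 1$, i.e. $m = a$ for a single $a \in A$, so $m \in A$. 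This simultaneously gives minimality and uniqueness: $M$ generates $S$ and sits inside every generating set, so no proper subset of $M$ can generate (it would omit an irreducible element), and any minimal $A$ satisfies $M \subseteq A$ with $M$ already generating, forcing $A = M$.
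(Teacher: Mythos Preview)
Your argument is correct and is essentially the standard one. However, the paper does not actually prove this theorem: immediately before stating it the authors write that proofs of the results in this section ``can be found in \cite{rosales2009numerical}'', and no proof is given in the paper itself. So there is nothing in the paper to compare your proposal against; what you have written is precisely the argument one finds in the cited monograph of Rosales and Garc\'ia-S\'anchez.

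One cosmetic point: in your finiteness step the inequality $m \geq c + n_1 \Rightarrow m - n_1 \in S^{*}$ tacitly needs $m - n_1 > 0$, which fails in the degenerate case $S = \mathbb{Z}_{\geq 0}$ (there $c = 0$, $n_1 = 1$, and your bound would give $M \subseteq \{\,s \in S^{*} : s < 1\,\} = \emptyset$, whereas $M = \{1\}$). This is harmless --- either treat $S = \mathbb{Z}_{\geq 0}$ separately, or note that $n_1 \in M$ always and restrict the inequality argument to $m > n_1$ --- but worth tidying before you rely on it elsewhere.
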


\begin{cor}
Let $S$ be a numerical semigroup generated by $A = \{ a_1 , \ldots , a_n \}$ with $ 0 \neq a_1 < a_2 < ... < a_n $. Then, $A$ is a minimal system of generators of $S$ if and only if $a_{i+1} \notin \langle a_1,a_2,\ldots,a_i \rangle$, for all $i \in \{1, \ldots , n-1 \}$.
\end{cor}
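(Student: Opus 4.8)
The plan is to lean entirely on the preceding Theorem, which identifies the unique minimal system of generators with the set $M := S^* \setminus (S^* + S^*)$. Since $A$ is assumed to generate $S$, the statement ``$A$ is a minimal system of generators'' is equivalent to the set equality $A = M$, and I would prove it by establishing both inclusions and translating each into the stated arithmetic condition. Before doing so I would record one standing observation that does most of the work: if $s \in S$ satisfies $0 < s < a_{i+1}$, then in any representation $s = \sum_k \lambda_k a_k$ only generators with $a_k \le s$ can occur, and since the $a_k$ are strictly increasing this forces $a_k < a_{i+1}$, i.e. $k \le i$; hence $s \in \langle a_1,\ldots,a_i\rangle$. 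This is the bridge connecting membership in $S^* + S^*$ with membership in the truncated subsemigroup $\langle a_1,\ldots,a_i\rangle$.

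For the ``only if'' direction I would assume $A = M$ and argue by contradiction: suppose $a_{i+1} \in \langle a_1,\ldots,a_i\rangle$ for some $i$, and write $a_{i+1} = \sum_{k \le i}\lambda_k a_k$. Because $a_{i+1} > a_k$ for every $k \le i$, the coefficient sum must be at least $2$ (a sum equal to $0$ or to a single generator cannot produce $a_{i+1}$), so $a_{i+1}$ splits as a sum of two nonzero elements of $S$. Then $a_{i+1} \in S^* + S^*$, hence $a_{i+1} \notin M = A$, contradicting $a_{i+1} \in A$.

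For the ``if'' direction I would assume $a_{i+1} \notin \langle a_1,\ldots,a_i\rangle$ for every $i$. I would first note the general inclusion $M \subseteq A$ valid for any generating set: an element of $M$ is not a proper sum, so in a representation as a combination of the $a_k$ exactly one coefficient equals $1$ and the rest vanish, forcing that element to be one of the $a_k$. Thus only $A \subseteq M$ remains. To get it, fix $a_j$ and suppose $a_j \in S^* + S^*$, say $a_j = s + t$ with $s,t \in S^*$; then $s,t < a_j$, and by the standing observation both lie in $\langle a_1,\ldots,a_{j-1}\rangle$, whence $a_j \in \langle a_1,\ldots,a_{j-1}\rangle$, contradicting the hypothesis. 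The case $j = 1$ is handled separately and immediately, since $a_1$ is the least nonzero element and so cannot be written as a sum of two nonzero elements of $S$. Hence every $a_j \in M$ and $A = M$.

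The only delicate point I anticipate is the correct use of the standing observation together with the strict monotonicity $a_1 < \cdots < a_n$, which is precisely what guarantees that any element below $a_{i+1}$ admits representations avoiding the generators $a_{i+1},\ldots,a_n$; I would also take care to treat the boundary case $j=1$ on its own. Everything else is a routine translation through the Theorem's description of $M$.
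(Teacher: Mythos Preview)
Your proof is correct. The paper does not actually supply its own proof of this corollary: as announced just before the lemma characterizing $\langle A\rangle$, all proofs in this introductory section are deferred to the monograph of Rosales and Garc\'ia-S\'anchez. Your argument---leaning on the preceding theorem's identification of the minimal generating set with $M = S^* \setminus (S^* + S^*)$, together with the observation that any $s \in S^*$ with $s < a_{i+1}$ necessarily lies in $\langle a_1,\ldots,a_i\rangle$---is the standard route and is carried out cleanly, including the separate handling of $a_1$.
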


It will be stated later in this section that the minimal system of generators of a numerical semigroup is in fact finite. Hereinafter, if we say that $S = \langle A \rangle$ with $A = \{ a_1 , a_2 , \ldots , a_n \}$ is a numerical semigroup, then we shall assume without loss of generality that $a_1 < a_2 < \cdots < a_n$, $\gcd (a_1, a_2, \ldots, a_n) = 1$, and that $A$ is the minimal system of generators of $S$. Some examples of numerical semigroups, which will be used for the rest of the section, are:

\begin{equation*}
\begin{split}
\langle 3,7 \rangle = \{ 0,3,6,7,9,10,12,\rightarrow \} \\
\langle 4,9 \rangle = \{ 0,4,8,9,12,13,16,17,18,20,21,22,24,\rightarrow \} \\
\langle 5,8,11 \rangle = \{ 0,5,8,10,11,13,15,16,18,\rightarrow \} \\
\langle 5,7,9 \rangle = \{ 0,5,7,9,10,12,14,\rightarrow \}
\end{split}
\end{equation*} Where $\rightarrow$ means that all integers thenceforth are also included in the numerical semigroup. Having thus defined and characterized what numerical semigroups are, we proceed to describe some of their combinatorial invariants.

\begin{defi}
Let $S = \langle a_1 , a_2 , \ldots , a_n \rangle$ be a numerical semigroup, then $$m(S) = a_1 \text{ and } e(S) = n$$ are called respectively the multiplicity of $S$ and the embedding dimension of $S$.
\end{defi}

\begin{lem}
Let $S$ be a numerical semigroup, then
$$m(S) = \min S^*$$
\end{lem}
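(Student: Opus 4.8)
The plan is to establish the equality $m(S) = \min S^*$ by proving the two inequalities $m(S) \geq \min S^*$ and $m(S) \leq \min S^*$ separately. Recall that, by our standing convention, $S = \langle a_1, a_2, \ldots, a_n \rangle$ with $a_1 < a_2 < \cdots < a_n$, so that $m(S) = a_1$, and that every element of $S$ is of the form $\sum_{i=1}^n \lambda_i a_i$ with $\lambda_i \in \mathbb{Z}_{\geq 0}$. I would first note that $S^*$ is a nonempty subset of the positive integers, so that $\min S^*$ is well defined by the well-ordering principle.

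For the inequality $\min S^* \leq a_1$, I would simply observe that $a_1$ is one of the generators of $S$, hence $a_1 \in S$, and since $a_1 > 0$ we have $a_1 \in S^*$; therefore $\min S^* \leq a_1$.

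For the reverse inequality $\min S^* \geq a_1$, I would take an arbitrary $s \in S^*$ and write it as $s = \sum_{i=1}^n \lambda_i a_i$ with each $\lambda_i \in \mathbb{Z}_{\geq 0}$. Since $s \neq 0$ and all the $a_i$ are positive, at least one coefficient $\lambda_i$ must be strictly positive, so $\sum_{i=1}^n \lambda_i \geq 1$. Using $a_i \geq a_1$ for every $i$, I can then bound
$$s = \sum_{i=1}^n \lambda_i a_i \geq a_1 \sum_{i=1}^n \lambda_i \geq a_1.$$
As $s \in S^*$ was arbitrary, this yields $\min S^* \geq a_1$, and combining both inequalities gives $m(S) = a_1 = \min S^*$.

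Since the argument is entirely elementary, there is no genuine obstacle; the only point requiring a modicum of care is the observation that a nonzero combination of positive generators must involve at least one strictly positive coefficient, which is precisely what forces $\sum_i \lambda_i \geq 1$ and hence the lower bound by $a_1$. It is worth remarking that minimality of the generating system is never used here — any generating set would suffice — but we phrase everything in terms of the minimal one so as to match the definition of $m(S)$ as $a_1$.
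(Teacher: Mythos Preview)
Your proof is correct and complete. The paper does not actually supply its own proof of this lemma; it is one of the results for which the authors simply refer the reader to \cite{rosales2009numerical}. Your two-inequality argument is the standard elementary one, and your closing remark that minimality of the generating system is irrelevant here is accurate.
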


\begin{defi}
The set of gaps of a numerical semigroup $S$ is defined as $$G(S) = \mathbb{Z}_{\geq 0} \backslash S.$$ Its cardinal, $$g(S) = | G(S)|,$$ is called the genus of $S$; and its maximum, $$f(S) = \max G(S),$$ is called the Frobenius number of $S$.
\end{defi}

In other words, as by definition any numerical semigroup $S$ has a finite complement in $\mathbb{Z}_{\geq 0}$, we can define the maximum of such complement as $f(S)$, known as the Frobenius number. In fact, the Frobenius problem described at the beginning of this chapter in Definition \ref{def:fcp} can be enunciated as the problem of finding $f(S)$ for a certain numerical semigroup $S$. We shall expand the concepts related to the difficulties that surround the calculation of the Frobenius number later on. Table \ref{table:semiexs} shows the combinatorial invariants associated for the previously given examples of semigroups. \\

\begin{table}
\centering
\begin{tabular}{ |c|c|c|c|c|c| } 
 \hline
$S= \langle A \rangle$ & $m(S)$ & $e(S)$ & $G(S)$ & $g(S)$ & $f(S)$ \\ \hline
$\langle 3,7 \rangle$ & $3$ & $2$ & $\{ 1,2,4,5,8,11 \}$ & $6$ & $11$ \\ \hline
$\langle 4,9 \rangle$ & $4$ & $2$ & $\{ 1,2,3,5,6,7,10,11,14,15,19,23 \}$ & $12$ & $23$ \\ \hline
$\langle 5,8,11 \rangle$ & $5$ & $3$ & $\{ 1,2,3,4,6,7,9,12,14,17 \}$ & $10$ & $17$ \\ \hline
$\langle 5,7,9 \rangle$ & $5$ & $3$ & $\{1,2,3,4,6,8,11,13 \}$ & $8$ & $13$ \\ \hline 
\end{tabular}
\caption{Combinatorial invariants for some examples of semigroups}
\label{table:semiexs}
\end{table}

Greek-French mathematician Roger Apéry (1916 -- 1994), better known for proving in 1979 the irrationality of $\zeta(3)
$ \cite{apery1979irrationalite}, also laid the background in the context of the resolution of singularities of curves \cite{apery1946geometrie} for an important set associated to a numerical semigroup $S$ and one of its elements. \\

\begin{defi}
The Apéry set of a numerical semigroup $S$ with respect to a certain $s \in S^*$ is defined as $$Ap(S,s) = \{ x \in S \ | \ x-s \notin S \}.$$ 
\end{defi}

A possible characterization of the elements of the Apéry set one by one, which will come to special use in the last section, is given by the following lemma. \\

\begin{lem} \label{aperysetalt}
Let $S$ be a numerical semigroup and let $s \in S^*$. Then, $Ap(S,s) = \{ \omega_0, \omega_1, ..., \omega_{s-1} \}$ where $\omega_i$ is the least element of $S$ congruent with $i$ modulo $s$, for all $i \in \{0, ... , s-1 \}$. Consequently, $|Ap(S,s)| = s$.
\end{lem}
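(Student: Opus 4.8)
The plan is to establish a bijection between the Apéry set $Ap(S,s)$ and the residue set $\{0,1,\ldots,s-1\}$ by showing that, for each $i$, the minimal element $\omega_i$ of $S$ in the congruence class $i$ modulo $s$ is well defined and is precisely the unique member of $Ap(S,s)$ lying in that class. The argument splits naturally into three parts: existence of the $\omega_i$, the equality $Ap(S,s)=\{\omega_0,\ldots,\omega_{s-1}\}$ via double inclusion, and the cardinality count.

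First I would verify that each $\omega_i$ exists. Since $S$ has finite complement in $\mathbb{Z}_{\geq 0}$, the set $\{x\in S : x\equiv i \pmod s\}$ omits only finitely many terms of the arithmetic progression $i, i+s, i+2s,\ldots$, hence is nonempty; being a nonempty subset of $\mathbb{Z}_{\geq 0}$ it has a least element, which I call $\omega_i$.

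Next I would prove the set equality by double inclusion. For one direction, take $x\in Ap(S,s)$ and set $i = x \bmod s$; if $x$ were not minimal in its class there would be $y\in S$ with $y\equiv i \pmod s$ and $y<x$, so that $x-y = ks$ with $k\geq 1$, giving $x-s = y+(k-1)s\in S$ (here I use that $s\in S$, that $0\in S$, and that $S$ is closed under addition), which contradicts $x-s\notin S$; hence $x=\omega_i$. Conversely, each $\omega_i$ lies in $S$ by construction, and $\omega_i - s\notin S$: this is immediate when $\omega_i - s < 0$, while if $\omega_i - s\geq 0$ then $\omega_i - s$ would be an element of $S$ congruent to $i$ and strictly smaller than $\omega_i$, violating minimality. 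Thus $\omega_i\in Ap(S,s)$.

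Finally, the assignment $x\mapsto x\bmod s$ restricts to a map $Ap(S,s)\to\{0,\ldots,s-1\}$ which the previous step shows to be a bijection, whence $|Ap(S,s)|=s$. I do not anticipate a serious obstacle here; the only points requiring care are the appeal to the finite-complement property to guarantee that every residue class is actually represented in $S$ (so that each $\omega_i$ exists), and the harmless boundary case $\omega_i - s < 0$ in the second inclusion.
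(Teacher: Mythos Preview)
Your argument is correct and is essentially the standard proof: you establish existence of each $\omega_i$ via the finite-complement hypothesis, show the double inclusion $Ap(S,s)=\{\omega_0,\ldots,\omega_{s-1}\}$, and conclude the cardinality from the distinctness of residues. The paper itself does not supply a proof of this lemma; it defers all proofs in that section to \cite{rosales2009numerical}, where the argument is precisely the one you give.
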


By means of an example on how to calculate the Apéry set of a semigroup with respect to a certain element, let $$S = \langle 5,8,11 \rangle = \{ 0,5,8,10,11,13,15,16,18,\rightarrow \}.$$ Then $Ap(S,5) = \{ \omega_0, \ldots , \omega_4 \}$, where

\begin{equation*}
\begin{split}
\omega_0 & = \min \{ x \in S \ | \ x \equiv 0 \mod 5 \} = 0 \\
\omega_1 & = \min \{ x \in S \ | \ x \equiv 1 \mod 5 \} = 11 \\
\omega_2 & = \min \{ x \in S \ | \ x \equiv 2 \mod 5 \} = 22 \\
\omega_3 & = \min \{ x \in S \ | \ x \equiv 3 \mod 5 \} = 8 \\
\omega_4 & = \min \{ x \in S \ | \ x \equiv 4 \mod 5 \} = 19 \\
\end{split}
\end{equation*} \\

We proceed to hint how the Apéry set proves some elementary (but nevertheless important) results of numerical semigroups.

\begin{prop}
The minimal system of generators of a numerical semigroup $S$ is finite.
\end{prop}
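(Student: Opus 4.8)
The plan is to exploit the fact, established in the Theorem above, that the minimal system of generators of $S$ coincides with $S^* \setminus (S^* + S^*)$, and then to trap this set inside the Apéry set, which is already known to be finite. Concretely, I would fix $s = m(S) = \min S^*$, the multiplicity of $S$, and argue that every minimal generator other than $m(S)$ must belong to $Ap(S, m(S))$. Since Lemma \ref{aperysetalt} guarantees that $|Ap(S, m(S))| = m(S) < \infty$, this containment delivers finiteness immediately.

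First I would take an arbitrary element $a$ of the minimal system of generators $S^* \setminus (S^* + S^*)$ with $a \neq m(S)$. Because $m(S) = \min S^*$ and $a \in S^*$, necessarily $a > m(S)$, so $a - m(S)$ is a nonnegative integer. The crucial observation is that $a - m(S) \notin S$: otherwise, since $a - m(S) > 0$ would place $a - m(S)$ in $S^*$, we could write $a = m(S) + (a - m(S))$ as a sum of two elements of $S^*$, contradicting $a \notin S^* + S^*$. Invoking the definition of the Apéry set, $a - m(S) \notin S$ means precisely that $a \in Ap(S, m(S))$. Hence the whole minimal system of generators lies in the finite set $\{m(S)\} \cup Ap(S, m(S))$, and is therefore finite.

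The only delicate point is this middle step, namely verifying that an indecomposable generator $a \neq m(S)$ cannot have $a - m(S)$ lying in $S$; but this is little more than a one-line reduction, resting on the fact that $m(S)$ is the \emph{smallest} nonzero element, so subtracting it and still landing in $S^*$ would exhibit $a$ as a proper sum. I do not expect a genuine obstacle here: the argument is exactly the kind of elementary consequence of the Apéry set that the surrounding text advertises, and the finiteness bound $e(S) \leq m(S)$ even drops out for free.
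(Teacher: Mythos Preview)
Your proof is correct and rests on the same core idea as the paper's: trap the minimal generators inside the finite set $\{s\} \cup Ap(S,s)$. The execution differs slightly. The paper picks an arbitrary $s \in S^*$, observes that every $t \in S$ decomposes uniquely as $t = us + v$ with $v \in Ap(S,s)$, concludes that $Ap(S,s) \cup \{s\}$ is a finite generating set, and then infers that the unique minimal system of generators is finite. You instead fix $s = m(S)$ and argue the containment directly from the characterization $S^* \setminus (S^* + S^*)$: if a minimal generator $a \neq m(S)$ had $a - m(S) \in S$, it would decompose in $S^* + S^*$. Your route is marginally more direct (it bypasses verifying that $Ap(S,s) \cup \{s\}$ actually generates $S$), and your specific choice $s = m(S)$ yields the bound $e(S) \leq m(S)$ as a byproduct, which the paper proves separately in the next lemma.
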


\begin{proof}
Let $s \in S^*$. Then, it is easy to see that for every $t \in S$ there exists a unique pair $(u,v) \in \mathbb{Z}_{\geq 0} \times Ap(S,s)$ such that $t = us + v$. Thus, $S$ is generated by $A = Ap(S,s) \ \cup \  \{ s \}$ and, as $A$ is finite, the unique minimal system of generators must be finite.
\end{proof}

\begin{lem}
Let $S$ be a numerical semigroup, then $$e(S) \leq m(S).$$
\end{lem}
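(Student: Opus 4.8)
The plan is to take $s = m(S) = a_1$ and to study the Apéry set $Ap(S,a_1)$, which by Lemma~\ref{aperysetalt} has exactly $a_1$ elements. The strategy is to show that every minimal generator $a_2,\ldots,a_n$ lies inside $Ap(S,a_1)$; since $0=\omega_0$ also belongs to this set and is distinct from all the $a_j$, this produces $n$ distinct elements inside a set of cardinality $a_1$, which immediately forces $n\le a_1$.

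The heart of the argument is the claim that $a_j\in Ap(S,a_1)$ for every $j\in\{2,\ldots,n\}$, equivalently that $a_j-a_1\notin S$. I would prove this by contradiction. Suppose $a_j-a_1\in S$, so that $a_j-a_1=\sum_{i=1}^n\lambda_i a_i$ with $\lambda_i\in\mathbb{Z}_{\ge 0}$. Since $a_1>0$ we have $a_j-a_1<a_j$, so no generator $a_i$ with $a_i\ge a_j$ can occur with a positive coefficient in this expression; hence $a_j-a_1\in\langle a_1,\ldots,a_{j-1}\rangle$. Adding $a_1$ gives $a_j\in\langle a_1,\ldots,a_{j-1}\rangle$, which contradicts the corollary characterizing minimal systems of generators (namely that $a_j\notin\langle a_1,\ldots,a_{j-1}\rangle$ for each $j$). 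Therefore $a_j-a_1\notin S$, i.e. $a_j\in Ap(S,a_1)$.

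With the claim in hand the conclusion follows at once: the elements $0,a_2,a_3,\ldots,a_n$ are pairwise distinct (since $0<a_2<\cdots<a_n$) and all lie in $Ap(S,a_1)$, so this Apéry set contains at least $n$ elements. Combining this with $|Ap(S,a_1)|=a_1$ from Lemma~\ref{aperysetalt} yields $n\le a_1$, that is, $e(S)\le m(S)$. The only real obstacle is the membership claim $a_j\in Ap(S,a_1)$; once it is secured, the cardinality count $|Ap(S,a_1)|=a_1$ and the final comparison are routine.
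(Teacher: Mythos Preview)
Your proof is correct. Both your argument and the paper's hinge on the Ap\'ery set $Ap(S,a_1)$ and its cardinality $a_1$, but they run in opposite directions. The paper observes (using the preceding proposition) that $(Ap(S,a_1)\cup\{a_1\})\setminus\{0\}$ is a generating set of size $a_1$, so the \emph{minimal} generating set can have at most $a_1$ elements. You instead inject the minimal generators into $Ap(S,a_1)$: you show $a_2,\ldots,a_n\in Ap(S,a_1)$, and together with $0$ this gives $n$ distinct elements inside a set of size $a_1$. The paper's route is shorter because it recycles the generating property already established; your route is more self-contained and makes explicit where the minimal generators live. As a minor simplification of your contradiction step: if $a_j-a_1\in S$ then, since $0<a_j-a_1$, we have $a_j=a_1+(a_j-a_1)\in S^*+S^*$, directly contradicting $a_j\in S^*\setminus(S^*+S^*)$ from the theorem characterizing the minimal system of generators---no need to argue about which $a_i$ can appear in a representation.
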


\begin{proof}
Let $a = m(S)$ and let $A = Ap(S,a) \ \cup \  \{ a \}$. Thus, as $S$ can be generated by $A \setminus \{0\}$ and $|A \setminus \{0\}| = a$, we can conclude that $e(S) \leq m(S)$.
\end{proof}

The Apéry set is noteworthy in the context of the Frobenius problem as there exists a relationship between its members (regardless of the element $s \in S^*$ we choose) and the Frobenius number, which we proceed to enunciate and prove.

\begin{thm} \label{propfrobmaxapery}
{\em \textbf{(A. Brauer -- J. E. Shockley, 1962)} \cite{brauer1962problem}} Let $S$ be a numerical semigroup and let $s \in S^*$. Then
$$f(S) = \max Ap(S,s) - s$$
$$g(S) = \frac{1}{s} \left( \sum_{\omega \in Ap(S,s)}\omega \right) - \frac{s-1}{2}$$
\end{thm}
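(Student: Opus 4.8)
The plan is to exploit the explicit description of the Apéry set given in Lemma~\ref{aperysetalt}, which decomposes $S$ according to residue classes modulo $s$. Writing $Ap(S,s) = \{\omega_0, \omega_1, \ldots, \omega_{s-1}\}$ with $\omega_i$ the least element of $S$ congruent to $i$ modulo $s$, the first step I would carry out is the structural observation that, for each fixed $i$, the elements of $S$ lying in the residue class $i$ modulo $s$ are \emph{exactly} $\{\omega_i + ks \mid k \in \mathbb{Z}_{\geq 0}\}$. Both inclusions are short: if $x \in S$ with $x \equiv i \pmod s$, then minimality of $\omega_i$ forces $x \geq \omega_i$, and since $x - \omega_i$ is a nonnegative multiple of $s$ we get $x = \omega_i + ks$; conversely $\omega_i + ks \in S$ because $s \in S$ and $S$ is closed under addition.

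Once this is in place, the gaps of $S$ inside the class $i$ are precisely the nonnegative integers congruent to $i$ modulo $s$ that lie strictly below $\omega_i$, namely $i, i+s, \ldots, \omega_i - s$, of which there are $(\omega_i - i)/s$. For the Frobenius number I would then argue that the largest gap within class $i$ is $\omega_i - s$, so that the overall maximum gap is obtained by selecting the class with the largest $\omega_i$:
\begin{equation*}
f(S) = \max_{0 \leq i \leq s-1}(\omega_i - s) = \Big(\max_{0 \leq i \leq s-1}\omega_i\Big) - s = \max Ap(S,s) - s.
\end{equation*}

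For the genus I would simply sum the per-class gap counts over all residues:
\begin{equation*}
g(S) = \sum_{i=0}^{s-1} \frac{\omega_i - i}{s} = \frac{1}{s}\sum_{i=0}^{s-1}\omega_i - \frac{1}{s}\sum_{i=0}^{s-1} i = \frac{1}{s}\left(\sum_{\omega \in Ap(S,s)}\omega\right) - \frac{s-1}{2},
\end{equation*}
where the last equality uses $\sum_{i=0}^{s-1} i = s(s-1)/2$. The one genuinely substantive step is the structural fact that $S$ meets each residue class in a full arithmetic progression of common difference $s$ starting at $\omega_i$; everything afterwards is bookkeeping. I expect the only points requiring care to be the degenerate class $i=0$ (where $\omega_0 = 0$ contributes no gaps, consistent with the count $(\omega_0 - 0)/s = 0$) and verifying that the maximum defining $f(S)$ is attained at a class with $\omega_i \geq s$, so that $\omega_i - s$ is indeed a legitimate nonnegative gap.
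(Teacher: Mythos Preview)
Your argument is correct and is the standard proof of the Brauer--Shockley formulas. Note, however, that the paper does not actually supply a proof of this theorem: it is stated with a citation to \cite{brauer1962problem} and then the text moves on, so there is nothing in the paper to compare your approach against. Your write-up could therefore stand in as the missing proof; the only cosmetic tightening I would suggest is to make explicit at the outset that $\omega_0 = 0$ (so the class $i=0$ contributes no gaps and cannot realise the maximum), and to remark that for $S \neq \mathbb{Z}_{\geq 0}$ some class $i \geq 1$ has $\omega_i \geq i + s \geq s$, which disposes of the legitimacy concern you flag at the end.
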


Now we exemplify the relationship between numerical semigroups and combinatorial optimization, as one of the most important problems in the latter branch of mathematics, known as the knapsack problem or rucksack problem, and more concretely one of its variants \cite{papadimitriou1998combinatorial} (p. 374), can be seen as the problem of deciding if a given integer $t$ belongs to a certain numerical semigroup $S$. 

\begin{defi}\label{def:ikp}
The numerical semigroup membership problem, or {\em NSMP}, is the problem of determining if, given a certain integer $t \in \mathbb{Z}_{\geq 0}$ and a numerical semigroup $S = \langle a_1,...,a_n \rangle$, the integer $t$ is contained in $S$. That is to say, if there exist non-negative integers $\lambda_1 , \ldots , \lambda_n \in \mathbb{Z}_{\geq 0}$ such that $$\sum_{i=1}^{n} \lambda_i a_i = t.$$
\end{defi}

The numerical semigroup membership problem is in {\em NP-complete}, as shown in \cite{papadimitriou1998combinatorial}. This fact was used by Jorge Ramírez-Alfonsín in 1996 \cite{ramirezalfonsin1996complexity} to finally prove that the Frobenius problem is in NP-hard (under Turing reductions). For that, he define a polynomial algorithm $\Lambda_{\text{NSMP}}$ for solving the NSMP which uses as a subroutine an unknown algorithm $\Lambda_{\text{FP}}$ that solves the Frobenius problem. Thus, he proved that the NSMP can be Turing reduced to the FP. As the NSMP is in NP-complete, he concluded that the FP is in NP-hard.\\

\section{The Ising spin problem in adiabatic quantum computing}

Our tackle of the Frobenius problem uses adiabatic quantum computing, more specifically that related to the Ising spin problem. This problem has a story where NP-hard problems are no strangers.\\

A form of adiabatic quantum computation \cite{mcgeoch2014adiabatic} is quantum annealing, where a known initial configuration of a quantum system evolves towards the ground state of a Hamiltonian that encodes the solution of an NP-hard optimization problem. The Canadian company D-Wave Systems announced in 2011 the first commercially available quantum annealer, composed of arrays of eight superconducting flux quantum bits with programmable spin–spin couplings, and published their results \cite{johnson2011quantum}. Subsequently in 2013, S. Boixo et al. \cite{boixo2013quantum} published their experimental results on the 108-qubit D-Wave One device. Their last chip, released in 2017 and called D-Wave 2000Q, has 2,048 qubits in a Chimera graph architecture \cite{dwave16}, and can be seen as a computer that solves the Ising spin problem, a particular type of quantum annealing which we proceed to describe. \\

The Ising spin model, originally formulated by physicist Wilhelm Lenz and first solved by his student, Ernst Ising \cite{ising1925report}, consists of a model of ferromagnetism in statistical mechanics in which we have to find the ground state of a system of $n$ interacting spins. If we represent the spins of these particles as binary variables $s_i \in \{-1,1\}$ with $i \in \{1,\ldots,n\}$, then the Ising Spin problem can be expressed as an integer optimization problem whose objective is to find the spin configuration of the system that minimizes the function $$H(s_1,\ldots,s_n) = \sum_{i=1}^{n} h_i s_i + \sum_{i=1}^{n-1} \sum_{j=i+1}^{n} J_{ij} s_i s_j,$$ where $h_i \in \mathbb{R}$ is the energy bias acting on particle $i$ (i.e. the external forces applied to each of the individual particles) and $J_{ij} \in \mathbb{R}$ is the coupling energy between the spins $i$ and $j$ (i.e. the interaction forces between adjacent spins). This problem was proved to be in NP-hard by Francisco Barahona \cite{barahona1982computational}, and can be effectively solved with the hardware implemented by D-Wave, whose chip permits to program independently the values of $h_i$ and $J_{ij}$ \cite{johnson2011quantum,boixo2013quantum}. It will be shown in the next section how to embed a certain optimization problem inside the D-Wave machine.

\section{An adiabatic quantum algorithm for the Frobenius problem}

The next algorithm shows the possibilities of calculating the Apéry set and the Frobenius number of a numerical semigroup with an actual adiabatic quantum computer. As described in the previous section, current D-Wave quantum annealers solve a certain kind of mathematical optimization problems known as Ising spin problems, namely $$H(s_1,\ldots,s_n) = \sum_{i=1}^{n} h_i s_i + \sum_{i=1}^{n-1} \sum_{j=i+1}^{n} J_{ij} s_i s_j$$ with $s_i \in \{-1,1\}$, where the objective is to find the minimum of $H(s_1,\ldots,s_n)$. \\

On the other hand, we recall that the Apéry set with respect to $s \in S \setminus \{0\}$, where $S = \langle a_1,\ldots,a_n \rangle$ is a numerical semigroup, is defined as $$Ap(S,s) = \{ x \in S \ | \ x-s \notin S \}.$$ The question is, how could we be able to translate this problem to the Ising spin model solved by the D-Wave machine? We have already shown in Lemma \ref{aperysetalt} that $Ap(S,s) = \{\omega_0,\ldots,\omega_{n-1}\}$, where $$\omega_i = \min \{x \in S : x \equiv i \mod s \}.$$ How about transforming the definition of $\omega_i$ into the answer of a mathematical optimization problem? \\

\begin{defi}
{\em \cite{papadimitriou1998combinatorial}} An integer linear program, or {\em ILP}, is defined in its canonical form as the optimization problem:

\begin{center}
	\begin{tabular}{rl}
		$\min$ & $c^Tx$\\
		subject to: & $Ax = b$\\
        &
	\end{tabular}
\end{center} where $x \in \mathbb{Z}_{\geq 0}^n $, $A \in \mathbb{Z}^n \times \mathbb{Z}^m$, $b \in \mathbb{Z}^m$ and $c \in \mathbb{Z}^n$.
\end{defi}

Thus, it is straightforward to see that the calculation of each of the $\omega_i$ can be redefined as the following ILP:

\begin{equation*}
\begin{array}{rl}
\text{min}  & \displaystyle\sum\limits_{j=1}^{n} a_{j}x_{j}\\
\textit{subject to:} & \displaystyle\sum\limits_{j=1}^{n}   a_{j}x_{j} = i + sk,\\
\end{array}
\end{equation*} with $x_{j} \in \mathbb{Z}_{\geq 0}$ for all $j=1 ,..., n$ and $k \in \mathbb{Z}_{\geq 0}$. \\ 

This mathematical optimization problem represents a way of calculating the Apéry set in its own right and, although integer linear programming is in NP-hard and its recognition version (deciding whether $Ax = b$ has a feasible solution or not regardless of its optimality) is in NP-complete \cite{papadimitriou1998combinatorial}, this computational complexity arises in the general case. In our context, it may prove to be an easier problem; however, details on this remain to be worked out. This approach for the calculation of the Apéry set first appeared in \cite{greenberg1980algorithm}, although Greenberg's work dealt with the direct calculation of the Frobenius number by means of $Ap(S,a_1)$, as will be discussed later. \\

We have tested this algorithm using state-of-the-art optimization software for integer optimization. In our case, the optimization problem for $\omega_i$ was modeled using AMPL \cite{fourer1990modeling,fourer2002ampl}, an algebraic modeling language for solving mathematical optimization problems. AMPL has a clear advantage, as our problem is entirely parameterized and AMPL allows us to describe the generic problem in a \verb|.mod| file while defining the actual values for the parameters in a separate \verb|.dat| file. This way, we have just to change the parameters inside the \verb|.dat| file in order to solve a new instance of the Apéry set. The \verb|.mod| file we propose is shown in Figure \ref{fig::aperysetmodfile} (the names of the parameters and variables of the problem are maintained so that no further explanation is required). \\

\begin{figure}[!b]
\begin{lstlisting}[language=AMPL]
	param n;
	set N := 1..n;
	param a {N};
	param s;
	param i;
	var X {N} integer, >= 0;
	var K integer;
	
	minimize T:
    	sum {j in N} a[j]*X[j];

	subject to C:
		sum {j in N} a[j]*X[j] = i + s*K;
\end{lstlisting}
\cprotect\caption{File \verb|apery_set_member.mod|}
\label{fig::aperysetmodfile}
\end{figure}

Let us suppose that we are interested in the numerical semigroup $S = \langle 11, 19, 23 \rangle$, and that we want to calculate the Apéry set of $s = 30$ (i.e., $Ap(S,30)$). The calculation of, for example, $\omega_5 \in Ap(S,30)$, will have the associated \verb|.dat| file depicted in Figure \ref{fig::aperysetdatfile}. In order to compute $\omega_5$, we also need a \verb|.run| file that will load the model and the data of the problem, and which will also call the solver for solving the problem. In our case, the solver we have chosen is Gurobi \cite{gurobi} which, among other things, solves integer linear problems. The \verb|.run| file we have used is shown in Figure \ref{fig::aperysetrunfile}, and the corresponding output we obtain can be seen in Figure \ref{fig::aperysetrunoutput}. This output tells us that $\omega_5 = 65$ and also that a representation of 65 with respect to the generators of the semigroup is $$65 = 0 \times (11) + 1 \times (19) + 2 \times (23).$$\\

\begin{figure}[!b]
\begin{lstlisting}[language=AMPL]
	param n := 3;
	param a :=
	    1   11
	    2   19
	    3   23;
\end{lstlisting}
\cprotect\caption{File \verb|numerical_semigroup.dat|}
\label{fig::aperysetdatfile}
\end{figure}

\begin{figure}[!b]
\begin{lstlisting}[language=AMPL]
	model apery_set_member.mod;
	data numerical_semigroup.dat;
	let s := 30;
	let i := 5;
	option solver gurobi;
	solve;
	display X;
	display K;
\end{lstlisting}
\cprotect\caption{File \verb|apery_set_member.run|}
\label{fig::aperysetrunfile}
\end{figure}

\begin{figure}[!t]
\begin{lstlisting}
	Gurobi 8.0.0: optimal solution; objective 65
	3 simplex iterations
	1 branch-and-cut nodes
	X [*] :=
	1  0
	2  1
	3  2
	;

	K = 2
\end{lstlisting}
\cprotect\caption{Output for \verb|apery_set_member.run|}
\label{fig::aperysetrunoutput}
\end{figure}

We can also write an alternative \verb|.run| file that will directly calculate and display the whole content of the Apéry set for a certain $s \in S$. First, we have to drop the \verb|param i := 5;| line from the \verb|.run| file, as it will be changed in each iteration of the main loop, and modify the \verb|.run| so that it will look like the one in Figure \ref{fig::aperysetcompleterun}. Thus, we obtain $Ap(S,30) = \{0,11,19,22,23,33,34,38,42,44,45,46,55,56,$ $57,61,65,66,67,69,
77,78,80,84,88,89,92,
100,103,111\}$ (see Figure \ref{fig::aperyset30} for the actual output).\\

\begin{figure}[!t]
\begin{lstlisting}[language=AMPL]
	model apery_set_member.mod;
	data numerical_semigroup.dat;
	let s := 30;
	option solver gurobi;
	param apery_set {0..s-1};
	for {l in 0..s-1} {
	    let i := l;
	    solve;
	    let apery_set[l] := T;
	}
	display apery_set;
\end{lstlisting}
\cprotect\caption{File \verb|apery_set.run|}
\label{fig::aperysetcompleterun}
\end{figure}

\begin{figure}[!t]
\begin{lstlisting}
	apery_set [*] :=
	 0   0    4  34    8  38   12  42   16  46   20  80   24  84   28  88
	 1  61    5  65    9  69   13 103   17  77   21 111   25  55   29  89
	 2  92    6  66   10 100   14  44   18  78   22  22   26  56
	 3  33    7  67   11  11   15  45   19  19   23  23   27  57
	;
\end{lstlisting}
\cprotect\caption{Output for \verb|apery_set.run|}
\label{fig::aperyset30}
\end{figure}

This algorithm also provides a way for calculating the Frobenius number of a numerical semigroup. We recall that, for any numerical semigroup $S$ and any integer $s \in S \setminus \{0\}$, then $$f(S) = \max \{Ap(S,s)\}-s.$$ 

Thus, as the difficulty of obtaining $f(S)$ this way increments with respect to the number $s$ we choose (we have to solve $s$ ILPs), the smartest way of proceeding is by solving it in the case where $s = \min (S \setminus \{0\})$ (i.e., $s = a_1$, as done by \cite{greenberg1980algorithm}). The AMPL file that represents this approach is shown in Figure \ref{fig::frobeniusnumberrun}. In our case, the output (Figure \ref{fig::frobeniusoutput}) tells us that $f(S) = 81$. All these files can be found in the public GitHub repository \cite{ossoriocastillo2018numsem}; however, a license for both AMPL and Gurobi is needed in order to run them and obtain the same results (or any result at all). \\

\begin{figure}[!b]
\begin{lstlisting}
	model apery_set_member.mod;
	data numerical_semigroup.dat;
	let s := a[1];
	option solver gurobi;
	param m default 0;
	for {l in 0..s-1} {
	    let i := l;
	    solve;
	    if T > m then let m := T;
	}
	param f := m - s;
	display f;
\end{lstlisting}
\cprotect\caption{File \verb|frobenius_number.run|}
\label{fig::frobeniusnumberrun}
\end{figure}

\begin{figure}[!b]
\begin{lstlisting}
	f = 81
\end{lstlisting}
\cprotect\caption{Output for \verb|frobenius_number.run|}
\label{fig::frobeniusoutput}
\end{figure}

What we have shown is a classical algorithm for obtaining the Apéry set and the Frobenius number in a general manner that depends on a black box that solves an ILP with global optimality. From now on we will explain the steps we have followed in order to solve this ILP with an adiabatic quantum computer and, specifically, with a D-Wave 2X machine (and also the obstacles we have encountered in that path). In order to transform this ILP problem into the Ising model solved by the D-Wave hardware, one step further involves changing its integer variables into a new set of binary variables with at most a polynomial cost.

\begin{defi}
{\em \cite{papadimitriou1998combinatorial}} A binary linear program, or {\em 0-1LP}, is defined in its canonical form as:

\begin{center}
	\begin{tabular}{rl}
		$\min$ & $c^Tx$\\
		subject to: & $Ax = b$\\
        &
	\end{tabular}
\end{center} where $x \in \{0,1\}^n$, $A \in \mathbb{Z}^n \times \mathbb{Z}^m$, $b \in \mathbb{Z}^m$ and $c \in \mathbb{Z}^n$.
\end{defi}

Both problems are polynomially equivalent, as shown in \cite{papadimitriou1998combinatorial} (Theorem 13.6), where an upper bound for the number of binary variables representing each integer variable from the original ILP problem is given. However, we can tight this number of binary variables by using our knowledge of the problem and one of the lower bounds of the Frobenius number given in \cite{ramirezalfonsin2005diophantine} (ideally, we could use $f(S)$). In 1935, Russian mathematician Issai Schur proved in a lecture in Berlin \cite{brauer1942problem,ramirezalfonsin2005diophantine} the following result: \\

\begin{thm}
{\em \textbf{(I. Schur, 1935)}} Let $S = \langle a_1, ..., a_n \rangle$ be a numerical semigroup. Then,
\begin{equation*}
f(S) \leq (a_1-1)(a_n-1)-1.
\end{equation*}
\end{thm}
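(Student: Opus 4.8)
The plan is to work with the Apéry set relative to the multiplicity $a_1$ and to invoke the Brauer--Shockley formula (Theorem \ref{propfrobmaxapery}). Taking $s = a_1$ there gives $f(S) = \max Ap(S,a_1) - a_1$, so by Lemma \ref{aperysetalt} it suffices to bound each $\omega_i$, the least element of $S$ congruent to $i$ modulo $a_1$. Concretely, I will show that $\omega_i \le (a_1-1)a_n$ for every $i \in \{0,\ldots,a_1-1\}$, since then $\max Ap(S,a_1) \le (a_1-1)a_n$ and a short computation,
\[
f(S) = \max Ap(S,a_1) - a_1 \le (a_1-1)a_n - a_1 = (a_1-1)(a_n-1) - 1,
\]
yields the claim.

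To bound $\omega_i$ I would reduce matters to a statement about residues modulo $a_1$: it is enough to prove that every residue class $r \bmod a_1$ contains an element of $S$ that is a sum of at most $a_1-1$ of the generators $a_2,\ldots,a_n$ (with repetitions allowed). Any such element $w$ lies in $S$, belongs to the class $r$, and satisfies $w \le (a_1-1)a_n$ because it has at most $a_1-1$ summands, each $\le a_n$; hence $\omega_r \le w \le (a_1-1)a_n$.

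The core step is therefore this reachability claim, and this is where the main obstacle lies. The tempting shortcut --- using only the multiples $0, a_n, 2a_n, \ldots, (a_1-1)a_n$ --- works when $\gcd(a_1,a_n)=1$ (indeed it recovers the classical two--generator value with equality), but it breaks down precisely when $\gcd(a_1,a_n) > 1$, since then these multiples occupy only a proper subgroup of residues. To cover the general case I would use all of $a_2,\ldots,a_n$ together with the standing hypothesis $\gcd(a_1,\ldots,a_n)=1$: reducing modulo $a_1$, the residues $\bar a_2,\ldots,\bar a_n$ generate the whole group $\mathbb{Z}/a_1\mathbb{Z}$, because the subgroup they generate (equivalently, the one generated by $\bar a_1=0,\bar a_2,\ldots,\bar a_n$) is cyclic of order $a_1/\gcd(a_1,\ldots,a_n)=a_1$. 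I then set $R_0 = \{0\}$ and $R_{k+1} = R_k \cup (R_k + \{\bar a_2,\ldots,\bar a_n\})$; as long as $R_k \ne \mathbb{Z}/a_1\mathbb{Z}$ the inclusion $R_k \subseteq R_{k+1}$ must be strict, for otherwise $R_k$ would be a submonoid of the finite group $\mathbb{Z}/a_1\mathbb{Z}$ containing every generator, hence the whole group. Starting from $|R_0| = 1$, at most $a_1-1$ strict enlargements exhaust the $a_1$ classes, so $R_{a_1-1} = \mathbb{Z}/a_1\mathbb{Z}$; that is, each residue $r$ is a sum of at most $a_1-1$ of the $\bar a_j$, which is exactly the reachability claim.

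The bookkeeping in the last paragraph is the part to execute with care: one must check that both the class $0$ (the empty sum) and the classes $r \ne 0$ are covered within the bound of $a_1-1$ terms, and that the \emph{strict growth until saturation} argument is airtight (equivalently, that the directed Cayley graph of $\mathbb{Z}/a_1\mathbb{Z}$ with connection set $\{\bar a_2,\ldots,\bar a_n\}$ is strongly connected of diameter at most $a_1-1$). Everything else reduces to the routine algebra already displayed above.
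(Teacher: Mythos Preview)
The paper does not supply a proof of Schur's bound; it merely quotes the result and cites \cite{brauer1942problem,ramirezalfonsin2005diophantine}. So there is nothing to compare against on the paper's side, and the relevant question is simply whether your argument stands on its own.

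It does. The reduction via Brauer--Shockley to bounding $\max Ap(S,a_1)$ is exactly the right move, and the arithmetic $(a_1-1)a_n - a_1 = (a_1-1)(a_n-1)-1$ is correct. The heart of the matter, that every residue class modulo $a_1$ is hit by a nonnegative combination of $a_2,\ldots,a_n$ using at most $a_1-1$ terms, is handled by your expanding-sets argument $R_0\subseteq R_1\subseteq\cdots$. One small wording issue: when $R_k=R_{k+1}$ you say ``$R_k$ would be a submonoid''; strictly speaking $R_k$ need not be closed under addition of arbitrary pairs of its own elements, only under translation by each $\bar a_j$. But that weaker closure is all you use (and all you need): since $0\in R_k$ and $R_k+\bar a_j\subseteq R_k$ for every $j$, the set $R_k$ contains the full submonoid generated by $\bar a_2,\ldots,\bar a_n$, which in the finite group $\mathbb{Z}/a_1\mathbb{Z}$ coincides with the subgroup they generate, namely all of $\mathbb{Z}/a_1\mathbb{Z}$ because $\gcd(a_1,\ldots,a_n)=1$. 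With that phrasing tightened, the strict-growth count from $|R_0|=1$ to $|R_{a_1-1}|=a_1$ is clean, the lift $w=\sum_{j\ge 2}c_j a_j\le (a_1-1)a_n$ gives $\omega_r\le (a_1-1)a_n$, and the bound follows.
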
 

Thus, if we define $$t_j = 1 + \left\lfloor \log_2\left(\dfrac{(a_1-1)(a_n-1) + s - 1}{a_j}\right) \right\rfloor $$ for every $j \in \{1,\ldots,n\}$, and $$u = 1 + \left\lfloor \log_2\left(\dfrac{(a_1-1)(a_n-1) + s - i - 1}{s}\right) \right\rfloor$$ we have given an upper bound for the number of bits needed to describe every variable $x_j$ in the worst case (we can also run every known bound for the Frobenius number, and stick to the minimum of them, but for now let us just use Schur's result). It follows that the transformation of our problem from ILP form to 0-1LP delivers the minimization problem shown below:

\begin{equation*}
\begin{array}{rll}
\text{min}  & \displaystyle\sum\limits_{j=1}^{n} a_{j}\displaystyle\sum\limits_{l=0}^{t_j}2^l x_{jl}\\
\textit{subject to:} & \displaystyle\sum\limits_{j=1}^{n}   a_{j}\displaystyle\sum\limits_{l=0}^{t_j}2^lx_{jl} = i + s\sum\limits_{m=0}^{u}2^m k_{m}
\end{array}
\end{equation*} with $x_{jl} \in \{0,1\}$ and $ k_m \in \{0,1\}$ for all $j$, $l$ and $m$ contained in the summation indices. \\

Let us come back to the definition of the Ising model. In it, we have binary variables which can have the values -1 or 1. On the other hand, in our 0-1LP for calculating the elements of the Apéry set the variables can be in 0 or 1. It is usually more advantageous to redefine the Ising spin problem as a quadratic unconstrained binary optimizacion problem, or QUBO, which consists of the minimization of the objective function
$$Q(x_1,\ldots,x_n) = c_0 + \sum_{i=1}^{n} c_i x_i + \sum_{1 \leq i < j \leq n} q_{ij} x_i x_j,$$ with $x_i \in \{0,1\}$, thus changing the possible states of its variables from $1$ and $-1$ to $1$ and $0$. The transformation follows from $s_i = 1 - 2x_i$, and can be easily checked. In fact, the documentation from D-Wave allows to program our problem directly into a QUBO formulation \cite{dwave16a}, besides the Ising model formulation. One detail remains to be figured out: QUBO problems have no constraints, but ours have one. How to proceed? \\

The most common way to transform a constrained optimization problem into an unconstrained one is by penalizing the constraints, putting them in the objective function. In this manner, we can force the fulfillment of the constraints of the problem by punishing the error committed in them. In our example, as we have a single equality constraint, namely $$\displaystyle\sum\limits_{j=1}^{n} a_{j}x_{j} = i + sk,$$ we can force it to the objective function this way:

\begin{equation*}
\begin{array}{rl}
\text{min}  & \displaystyle\sum\limits_{j=1}^{n} a_{j}x_{j} + \lambda^{(\nu)} \left( \displaystyle\sum\limits_{j=1}^{n}   a_{j}x_{j} - i - sk \right)^2\\
\end{array}
\end{equation*} where $\lambda^{(\nu)}$ is updated after every iteration in the following way, until no change in $\lambda$ is obtained:

\begin{equation*}
\lambda^{(\nu + 1)} = \lambda^{(\nu)} + \alpha \cdot \left| \displaystyle\sum\limits_{j=1}^{n}   a_{j}x_{j} - i - sk \right|
\end{equation*} with $\alpha > 0$. \\

In 0-1LP form, we obtain the following QUBO problem:

\begin{equation*}
\begin{array}{rll}
\text{min}  & \displaystyle\sum\limits_{j=1}^{n} a_{j}\displaystyle\sum\limits_{l=0}^{t_j}2^l x_{jl} + \lambda^{(\nu)} \left( \displaystyle\sum\limits_{j=1}^{n}   a_{j}\displaystyle\sum\limits_{l=0}^{t_j}2^lx_{jl} - i - s\sum\limits_{m=0}^{u}2^m k_{m} \right)^2,
\end{array}
\end{equation*} with its corresponding penalty updating formula:

\begin{equation*}
\lambda^{(\nu + 1)} = \lambda^{(\nu)} + \alpha \cdot \left| \displaystyle\sum\limits_{j=1}^{n}   a_{j}\displaystyle\sum\limits_{l=0}^{t_j}2^lx_{jl} - i - s\sum\limits_{m=0}^{u}2^m k_{m} \right|
\end{equation*} with $\alpha > 0$. \\

Now, we are going to test the performance of this new procedure for obtaining the Apéry set (and the Frobenius number) via a classical solver. For that, we have just to modify the original \verb|apery_set_member.mod| file into a new one with an unconstrained ILP, with the constraint penalized in the objective function as previously exaplained. This new AMPL file can be found in Figure \ref{fig::aperysetmemberlagrmod}, along with its corresponding \verb|.run| file in Figure \ref{fig::aperysetlagrrun}. Additionaly, we are going to keep track of the number of iterations of the algorithm (i.e., $\nu$), and on the values of $\lambda^{(\nu)}$ needed for the algorithm to stop. This way, if we start with $\lambda^{(0)} = 0$, the output for this algorithm is found in Figure \ref{fig::outputaperysetlagrrun}. On the other hand, if we start with $\lambda^{(0)} = 100$, the output is shown in Figure \ref{fig::outputaperysetlagrrun_b}. \\

\begin{figure}[!t]
\begin{lstlisting}
	param n;
	set N := 1..n;
	param a {N};
	param s; #
	param i; #
	param lambda;
	param rest;
	var X {N} integer, >= 0;
	var K integer;

	minimize T:
    	sum {j in N} a[j]*X[j] 
    	+ lambda*((sum {j in N} a[j]*X[j] - i - s*K)^2);
\end{lstlisting}
\cprotect\caption{File \verb|apery_set_member_lagr.mod|}
\label{fig::aperysetmemberlagrmod}
\end{figure}

\begin{figure}[!b]
\begin{lstlisting}
	model apery_set_member_lagr.mod;
	data numerical_semigroup.dat;
	let s := 30;
	param apery_set {0..s-1};
	param lambdas {0..s-1};
	param iterations {0..s-1};
	param it default 0;
	option solver gurobi;
	
	for {l in 0..s-1} {
	    let i := l;
	    let lambda := 0;
	    let it := 0;
	    let rest := 1;
	    repeat {
	        solve;
	        let rest := abs(sum {j in N} a[j]*X[j] - i - s*K);
	        let lambda := lambda + rest;
    	    let it := it + 1;
    	} while rest != 0;
    	let apery_set[l] := T;
    	let lambdas[l] := lambda;
    	let iterations[l] := it;
	}

	display apery_set;
	display iterations;
	display lambdas;
\end{lstlisting}
\cprotect\caption{File \verb|apery_set_lagr.run|}
\label{fig::aperysetlagrrun}
\end{figure}

\begin{figure}[!t]
\begin{lstlisting}
	apery_set [*] :=
 	0   0    4  34    8  38   12  42   16  46   20  80   24  84   28  88
 	1  61    5  65    9  69   13 103   17  77   21 111   25  55   29  89
 	2  92    6  66   10 100   14  44   18  78   22  22   26  56
 	3  33    7  67   11  11   15  45   19  19   23  23   27  57
	;

	iterations [*] :=
0  1    3  2    6  2    9  2   12 22   15  2   18  2   21 11   24 40   27  2
1 63    4  2    7  2   10 22   13 21   16  2   19  2   22  2   25  2   28  2
2  2    5  2    8  2   11  2   14  2   17  2   20 44   23  2   26  2   29  3
;

lambdas [*] :=
 	0   0    4  34    8  68   12  32   16  46   20  62   24  62   28  58
 	1  62    5  35    9  39   13  62   17  47   21  90   25  85   29  90
 	2  62    6  66   10  90   14 104   18  78   22 112   26  56
 	3  93    7  67   11 101   15  45   19  79   23  23   27  57
	;
\end{lstlisting}
\cprotect\caption{Output for  \verb|apery_set_lagr.run| and $\lambda^{(0)} = 0$}
\label{fig::outputaperysetlagrrun}
\end{figure}

\begin{figure}[!t]
\begin{lstlisting}
apery_set [*] :=
 0   0    4  34    8  38   12  42   16  46   20  80   24  84   28  88
 1  61    5  65    9  69   13 103   17  77   21 111   25  55   29  89
 2  92    6  66   10 100   14  44   18  78   22  22   26  56
 3  33    7  67   11  11   15  45   19  19   23  23   27  57
;

iterations [*] :=
 0 1    3 1    6 1    9 1   12 1   15 1   18 1   21 1   24 1   27 1
 1 1    4 1    7 1   10 1   13 1   16 1   19 1   22 1   25 1   28 1
 2 1    5 1    8 1   11 1   14 1   17 1   20 1   23 1   26 1   29 1
;

lambdas [*] :=
 0 100    4 100    8 100   12 100   16 100   20 100   24 100   28 100
 1 100    5 100    9 100   13 100   17 100   21 100   25 100   29 100
 2 100    6 100   10 100   14 100   18 100   22 100   26 100
 3 100    7 100   11 100   15 100   19 100   23 100   27 100
;
\end{lstlisting}
\cprotect\caption{Output for  \verb|apery_set_lagr.run| and $\lambda^{(0)} = 100$}
\label{fig::outputaperysetlagrrun_b}
\end{figure}

Foreseeably, the Apéry set we obtain for $S$ and $s = 30$ is correct, but that should not be a surprise as both optimization problems are equivalent to the original one. What is of interest here is that a low starting point for $\lambda$ implies a relatively large number of iterations, which translates in more calls to our integer linear programming solver than in the original constrained version. However, if we start with a large enough value of $\lambda$, we can obtain the same results without additional calls to the solver and just one iteration for each of the elements of the Apéry set. On the other hand, if we exceed in the value for $\lambda$, we would also need just one iteration per $\omega_i$, but the solver would need more time in order to find the global optimum. The latter behavior can be observed when taking into account the number of simplex iterations and branch-and-cut nodes that Gurobi needs for each of the problems. \\

Nevertheless, this unconstrained reformulation of the problem is not meant to be solved by Gurobi or any other integer linear programming solver, but rather by an ideal adiabatic quantum computer, and more concretely by one of the few D-Wave machines that currently exist worldwide. Our unconstrained problem, in QUBO form, is shown below (please note that, if $x \in \{0,1\}$, then $x = x^2$):

$$c_0 
+ \sum_{m=0}^{u} c_m k_m +
\sum_{j=1}^{n} \left( \sum_{l=0}^{t_j} c_{jl} x_{jl} \right) 
+ \sum_{1 \leq j < j' \leq n} \left( \sum_{0 \leq l < l' \leq \min(t_j,t_{j'})}i q_{jl,j'l'} x_{jl} x_{j'l'} \right)$$

$$+ \sum_{j=1}^{n} \left[ \sum_{l=0}^{t_j} \left( \sum_{m=0}^{u} q_{jl,m} x_{jl} k_m \right) \right] + \sum_{0 \leq m < m' \leq u} q_{m,m'}, $$ where 

\begin{equation*}
\begin{array}{rll}
c_0 & = \lambda^{(\nu)} i^2\\
c_m & = \lambda^{(\nu)} \left( 2^{2m} s^2 + 2^{m+1} is \right)\\
c_{jl} & = 2^l a_j + \lambda^{(\nu)} \left( 2^{2l} a_j^2 - 2^{l+1} a_j i \right)\\
q_{m,m'} & = 2^{m+m'+1} \lambda^{(\nu)} s^2 \\
q_{jl,m} & = 2^{l+m+1} \lambda^{(\nu)} a_j s \\
q_{jl,j'l'} & = 2^{l+l'+1} \lambda^{(\nu)} a_j a_{j'}.
\end{array}
\end{equation*}

Right now, an ideal quantum annealer should be able to solve our QUBO problem for finding the members of the Apéry set, provided that such a computer has a sufficiently large enough amount of qubits, and also that the graph connecting the qubits is complete. However, to date there are no quantum annealers that fulfill those requirements (they may be available in the future, though). The latest quantum annealers commercially available are the D-Wave 2X, with 1152 and 3360 couplers (connections between adjacent qubits), and the D-Wave 2000Q, which has 2048 qubits and 6016 couplers. Their graphs are far from being complete, as every qubit in their grids are at most connected with six other qubits, as shown in Figure \ref{fig::dwavequbo} (it may be less than that, as some qubits may be off after the last recalibration of the machine). \\

The importance of the completeness of the graph is problem dependant. Our QUBO instance, for example, has a complete connectivity graph between its variables. With an ideal quantum annealer we would have no problem, but with the D-Wave machine it is mandatory to transform our problem graph into an alternative graph that could be embedded into the Chimera graph. In other words, we have to solve an instance of the subgraph isomorphism problem, which happens to be in NP-complete. Even more, our problem may not be embeddable inside the D-Wave (for example, for the 1152 qubit Chimera grid of the D-Wave 2X, the largest complete graph that can be embedded into it is believed to be the $K_{33}$). Research on the subject of embedding a problem graph into D-Wave's Chimera graph can be found in \cite{choi2008minor} and \cite{choi2011minor}, where the concepts of \textit{embedding} and \textit{parameter setting} are explained. \\

\begin{figure}[!b]
\begin{lstlisting}
 p qubo 0 3 3 2
 0 0 2.6
 1 1 4.5
 2 2 -1.8
 0 1 3.5
 1 2 2.0
\end{lstlisting}
\cprotect\caption{Example of \verb|.qubo| file}
\label{fig::qubofileexample}
\end{figure}

\begin{figure}[!b]
\begin{lstlisting}
 3 bits,  find Min, SubMatrix= 47, -a o, timeout=2592000.0 sec
 001
 -1.80000 Energy of solution
 0 Number of Partitioned calls, 1 output sample
 0.03002 seconds of classic cpu time
\end{lstlisting}
\cprotect\caption{Output for the example \verb|.qubo| file}
\label{fig::qbsolvoutputexample}
\end{figure}

There is a way to skip these limitations, by solving subinstances of our graph instead of the complete graph. For that, D-Wave released a graph partitioning open source library called \verb|qbsolv| \cite{qbsolv}. Its corresponding executable needs a certain kind of file format (called \verb|.qubo|), to work. For example, if our QUBO instance is defined by $$2.6 x_0 + 4.5 x_1 - 1.8 x_2 + 3.5 x_0 x_1 + 2 x_1 x_2,$$ where $x_0,x_1,x_2 \in \{0,1\},$ its corresponding \verb|.qubo| file is the one shown in Figure \ref{fig::qubofileexample}. The format is quite: the first line always starts with \verb|p qubo 0|, followed by the number of variables, the number of nonzero diagonals, and the number of nonzero couplings. The output obtained by \verb|qbsolv| for this file can be seen in Figure \ref{fig::qbsolvoutputexample}. \\

\begin{figure}[!t]
\includegraphics[height=12cm]{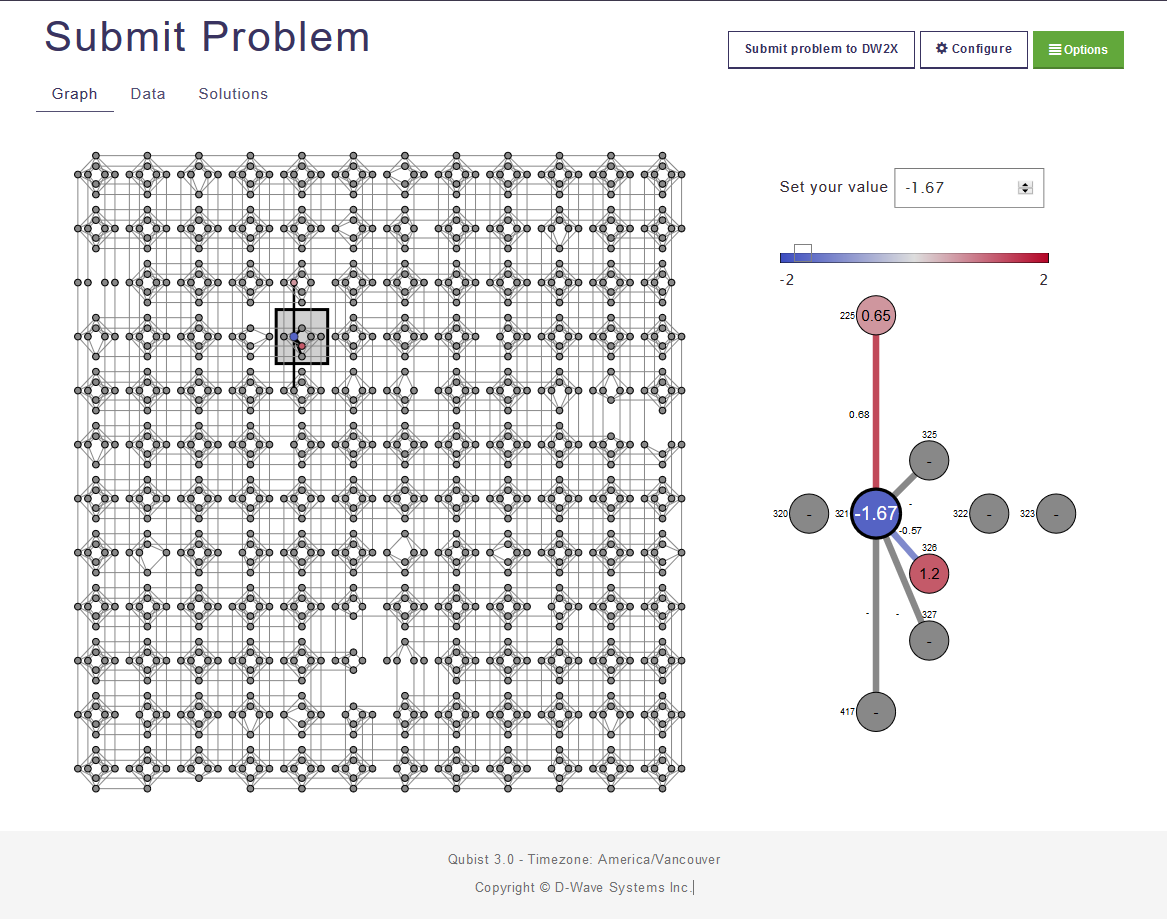}
\centering
\caption{Introducing a problem inside the D-Wave 2X}
\label{fig::dwavequbo}
\end{figure}

Please note that, in the previous paragraph, we have shown the solution that \verb|qbsolv| obtains for a certain instance of the problem. This is because \verb|qbsolv| has an auxiliary internal optimization solver based on the tabu search \cite{glover1986future}, which gives a solution for the subproblems and then unifies all the solutions into the complete one for all the variables. However, this classical method does not guarantee a global solution, and is of no help to us in the general case. We can decide if \verb|qbsolv| tries to solve the problem with its tabu search, of if we prefer to connect to a D-Wave machine. \\

Finally, our QUBO subproblem is solved with the D-Wave 2X via one of the possible inputs allowed. The first one is shown in Figure \ref{fig::dwavequbo}, while the second one is again a \verb|.qubo| file obtained internally with \verb|qbsolv|. As part of the project Joint Research Unit Repsol-ITMATI (code file: IN853A 2014/03), we had the opportunity to try the D-Wave 2X machine based on the University of Southern California. However, due to the amount of subproblems needed to solve a proper instance of the Frobenius problem or the Apéry set, it was impractical to do so with the amount of time given and the current size of the graph of the D-Wave 2X machine.\\

\section{A word on conclusions}

Regarding the algorithms for the Apéry set and the Frobenius number, there are two aspects that need to be improved prior to completing a study of its feasibility and performance. 

First, the current graph (i.e., Chimera graph) architecture of the available adiabatic quantum computers (i.e., the D-Wave machines) extremely obstruct the resolution of problems that have an almost full connectivity index between its variables, as in this case (the use of \verb|qbsolv| is just a temporary workaround, or it should be as so). 

And second, current adiabatic quantum computers do not guarantee global optimality, as opposed to the theoretical result deducted from the adiabatic theorem; in reality, the D-Wave just make a few runs of the process (instead of just one, as would be in the theoretical case) and, following a certain probability distribution, try to guarantee that the best of the solutions obtained by those runs is in fact the global solution to the problem. This solution, however, cannot be proven to be the global optimum (as global optimality is not known to be in the class NP), which makes useless our attempt to find those two combinatorial invariants via current adiabatic quantum computers. 

In the future, with more reliable quantum annealers, however, this solution may prove to be effective and faster, but for now it is just a theoretical method.

\bibliographystyle{siam}
\bibliography{refs}

\end{document}